\newcommand{\trasp}{^{\mathsf{T}}}
\newcommand{\mnf}{\mathcal{M}_r}
\newcommand{\ee}{\mathrm{e}}
\newcommand{\dd}{\,\textrm{d}}
\newtheorem{lemma}{Lemma}
\begin{document}

\begin{frontmatter}
	
\title{Numerical low-rank approximation of \\ matrix differential equations}
\author[ibk,yachay]{Hermann~Mena}
\ead{mena@yachaytech.edu.ec}
\author[ibk]{Alexander~Ostermann}
\ead{alexander.ostermann@uibk.ac.at}
\author[ibk]{Lena-Maria~Pfurtscheller}
\ead{lena-maria.pfurtscheller@uibk.ac.at}
\author[ibk]{Chiara~Piazzola\corref{cor1}}
\ead{chiara.piazzola@uibk.ac.at}
\address[ibk]{Institut f\"ur Mathematik, Universit\"at Innsbruck, Austria}
\address[yachay]{Department of Mathematics, Yachay Tech, Urcuqu\'i, Ecuador}

\cortext[cor1]{Corresponding author}

\begin{abstract}
The efficient numerical integration of large-scale matrix differential equations is a topical problem in numerical analysis and of great importance in many applications. Standard numerical methods applied to such problems require an unduly amount of computing time and memory, in general. Based on a dynamical low-rank approximation of the solution, a new splitting integrator is proposed for a quite general class of stiff matrix differential equations. This class comprises differential Lyapunov and differential Riccati equations that arise from spatial discretizations of partial differential equations. The proposed integrator handles stiffness in an efficient way, and it preserves the symmetry and positive semidefiniteness of solutions of differential Lyapunov equations. Numerical examples that illustrate the benefits of this new method are given. In particular, numerical results for the efficient simulation of the weather phenomenon El Ni\~no are presented.
\end{abstract}

\begin{keyword}
Dynamical low-rank approximation \sep  Differential Lyapunov equations \sep Differential Riccati equations\sep  Linear quadratic regulator problem\sep  Splitting integrators \sep  El Ni\~no simulation.
\MSC[2010] 65L05 \sep 65F30 \sep 49J20
\end{keyword}

\end{frontmatter}


\section{Introduction}
Matrix differential equations arise quite naturally in many applications in science and engineering. Perhaps the most studied matrix differential equations are differential Riccati (DREs) and differential Lyapunov equations (DLEs) as they play a crucial role in many applications. For instance, they arise in optimal control problems like linear quadratic regulator and linear quadratic Gaussian design, $H_\infty$ control of linear time varying systems, optimal filtering, differential games, model reduction of linear time varying systems, damping optimization in mechanical systems, and control of shear flows subject to stochastic excitations. For an overview and more details, we refer to \cite{AboFIJ03,Ant05,PetUS00}.

In the literature, there is a large variety of approaches to compute the solution of DREs and DLEs, see, e.g., \cite{ChoL90b,Die92}. However, due to efficiency reasons, standard methods are not suitable for large-scale problems, in general. A major source of large-scale problems are partial differential equations (PDEs). In two and three space dimensions, their spatial discretization leads to systems with a significant number of degrees of freedom. The system matrices have often a particular structure, and they are usually sparse. In general, the arising differential equation are stiff which in turn requires integrators that handle the stiffness in an efficient way. 

The problem of solving large-scale DREs/DLEs has recently received considerable attention. Various integrators based on low-rank approximations have been developed. In particular, in~\cite{BennerMena1,BennerMena2} the authors proposed efficient numerical methods which are based on a matrix valued version of backward differentiation formulas (BDFs) and Rosenbrock methods, respectively. Further, a low-rank splitting method for large-scale DREs has been introduced in~\cite{Stillfjord}.
A different approach to compute low-rank approximations to large-scale matrix differential equations was proposed in \cite{KL07}. It is the so-called dynamical low-rank approximation. Its strength relies on the fact that a low-rank approximation to the solution is computed by solving differential equations only for its low-rank factors, see also \cite{KLW16, LO14, NL08}.
 
The performance of low-rank based methods relies on the decay of the singular values of the solutions. This phenomenon has been deeply studied and is frequent in applications, see, e.g., \cite{AntSZ02,Sab07} and references therein. 

In this paper, we focus on a new approach for solving DREs and DLEs that arise in PDE based models. 
Following the ideas of \cite{CKOR16} and \cite{LO14} we develop an efficient algorithm based on a splitting integrator. We split the vector field into a linear stiff and a non-linear non-stiff part. The stiff problem is efficiently integrated by an exponential integrator. We employ here the Leja method which proves to be very competitive, see~\cite{CKOR16}. We combine this integrator with the dynamical low-rank approximation for the non-stiff part. In particular, we employ the projector-splitting scheme, proposed in \cite{LO14}. This choice makes our low-rank integrator very robust with respect to small singular values. In this work we keep the rank fixed during time integration. Changing dynamically the rank would require a strategy for adapting the rank, based on a control of the corresponding error. Such a rank-adaptive approach is feasible but beyond the scope of our work. However, we illustrate the behaviour of our integrator with respect to different approximation ranks by numerical experiments.

The paper is organized as follows. In Section 2 we introduce the class of matrix differential equations considered and we propose our low-rank numerical integrator. In Sections 3 and 4, we specialize our integrator for DLEs  and DREs, and we discuss a possible extension to generalized DREs. We give details on the implementation and present numerical results that support our chosen approach. Finally, some conclusions are given.


\section{Matrix differential equations and numerical integrators} \label{integrator}
In this work we are interested in the numerical solution of the following class of matrix differential equations
\begin{align} \label{eq}
\dot{X}(t) = AX(t)+X(t)A\trasp+G(t,X(t)), \quad X(t_0) = X_0, \quad t \in [t_0,T],
\end{align}
where $X(t)$, $A \in \mathbb{R}^{d \times d}$, $G: \mathbb{R} \times \mathbb{R}^{d \times d} \rightarrow \mathbb{R}^{d \times d}$, and $(\cdot)\trasp$ denotes the transpose. This class includes, among others, differential Lyapunov and Riccati equations.
In particular, we consider here problems stemming from PDEs. Therefore, the matrix $A$ is typically the spatial discretization of a differential operator. The function $G$ on the other hand is assumed to be non-stiff.
By means of the variation-of-constants formula, the solution of \eqref{eq} can be written  as
\begin{equation}\label{eq:voc}
X(t) = \ee^{(t-t_0) A} X(t_0) \ee^{(t-t_0) A\trasp} + \int_{t_0}^t \ee^{(t-s)A} G(s,X(s)) \ee^{(t-s)A\trasp} \dd s
\end{equation}
for $t_0\le t\le T$.

The stiffness of \eqref{eq} is a limitation for most of the standard numerical integrators. Explicit methods have to use tiny step sizes whereas the use of implicit methods can result in high computational cost. Both approaches are thus not efficient. As a remedy, we design here a numerical integrator which is able to handle the stiff part of the solution \eqref{eq:voc} in an exact and efficient way. The key idea is the use of splitting methods. For an introduction to this class of numerical integrators we refer to \cite{HLW00}.
The structure of \eqref{eq} suggests a splitting into two terms, where we split the stiff linear part from the non-stiff non-linearity. Following this approach, two subproblems arise:
\begin{subequations}\label{sub}
	\begin{align}
	\label{sub1} \dot{M}(t) & = AM(t)+M(t)A\trasp, \\
	\label{sub2} \dot{N}(t) & = G(t,N(t)),
\end{align}
\end{subequations}
where $t \in [t_0,T]$.
A clear advantage of this splitting approach is that the stiffness is only present in the first subproblem \eqref{sub1}, whereas the second equation \eqref{sub2} is a non-stiff ordinary differential equation. Therefore, we have more freedom for the numerical integration of \eqref{sub2}.
Depending on the way of recombining the partial flows of \eqref{sub}, we obtain splitting methods with different orders of convergence.

Let us denote with $\Phi_{\tau}^A(Z)$ the exact solution of \eqref{sub1} at $t_0+\tau$  with initial value $M(t_0) = Z$, i.e.,
\begin{align} \label{exact_linear}
\Phi_{\tau}^A(Z) = M(t_0+\tau) = \ee^{\tau A} Z \ee^{\tau A\trasp}.
\end{align}
Further, let $\Phi^G_{\tau}(Z)$ denote the exact solution of \eqref{sub2} with initial value $N(t_0)=Z$, i.e., $\Phi_{\tau}^G(Z) = N(t_0+\tau)$.
The simplest splitting integrator for solving \eqref{eq} is then given by
\begin{align} \label{Lie}
\mathcal{L}_{\tau} Z = \Phi_{\tau}^G \circ \Phi^A_{\tau}(Z),
\end{align}
where $Z=X_0$. The result $\mathcal{L}_{\tau} X_0$ is the numerical approximation to $X(t_0+\tau)$. This is a first-order method and called Lie splitting in the literature.
A second-order method, the so-called Strang splitting, is given by the symmetric formula
\begin{align} \label{Strang}
\mathcal{S}_{\tau} Z = \Phi^A_{\tau/2}\circ \Phi_{\tau}^G \circ \Phi^A_{\tau/2}(Z).
\end{align}

The numerical evaluation of \eqref{exact_linear} can be computationally expensive if the dimension $d$ is large. In this case we propose to approximate the action of the flow $\Phi^A_{\tau}$ by means of a low-rank approximation.
Note that the differential equation \eqref{sub1} is rank preserving, i.e, the rank of the solution $M(t)$ does not depend on time (see \cite[Lemma 1.22]{HM96}).
A rank-$r$ decomposition ($r\leq d$) of the initial data $Z$ is typically given by a truncated singular value decomposition (SVD) of the form $USV\trasp$, where $U,V \in \mathbb{R}^{d \times r}$ have orthonormal columns and $S \in \mathbb{R}^{r \times r}$ is diagonal.
Using this approximation for the initial data $Z$, we obtain a rank-$r$ approximation $M_1$ to the exact solution $M(t_0+\tau) = \Phi_{\tau}^A(Z)$ by
\begin{align} \label{approx_linear}
M_1 = \ee^{\tau A} USV\trasp \ee^{\tau A\trasp}.\end{align}
Note that the action of the matrix exponential $\ee^{\tau A}$ to the skinny matrices $U$ and $V$ can be efficiently computed, for instance, by Taylor interpolation \cite{ALH11}, interpolation at Leja points \cite{CKOR16}, and Krylov subspace methods \cite{G13,S92}. We will compute these actions here with the Leja method.
The resulting matrices $\ee^{\tau A} U$ and $\ee^{\tau A} V$ can be orthogonalized by means of a QR decomposition such that $\ee^{ \tau A} U= \widetilde{U} R$, where $\widetilde{U} \in \mathbb{R}^{d \times r}$ has orthonormal columns and $R~\in~\mathbb{R}^{r \times r}$.  Similarly we have $\ee^{ \tau A} V = \widetilde{V} P$.
After defining $\widetilde{S}=RSP\trasp$, a rank-$r$ solution of \eqref{sub1} is given as $M_1 = \widetilde{U} \widetilde{S} \widetilde{V}\trasp$.

For the solution of \eqref{sub2} we make use of the dynamical low-rank approach proposed in \cite{KL07} (see also \cite{KLW16,LO14,NL08}). This is a differential equation based approach to efficiently compute low-rank approximations to time dependent large matrices or to solutions of large-scale matrix differential equations.
Let us denote by $\mnf = \mnf^{d \times d}$ the manifold of rank-$r$ matrices of dimension $d\times d$.
At any time $t$, a rank-$r$ approximation to the solution $N(t)\in \mathbb{R}^{d \times d}$ of \eqref{sub2} is a matrix $Y(t) \in \mnf$ defined by the following condition: for every $t$, the matrix $\dot{Y}(t) \in \mathcal{T}_{Y(t)}\mnf$ is such that
\begin{equation} 
\label{min}\lVert \dot{Y}(t)-G(t,Y(t)) \rVert_F = \min, 
\end{equation}
where $\mathcal{T}_{Y(t)}\mnf$ is the tangent space of $\mnf$ at the current state $Y(t)$ and $\lVert \cdot \rVert_F $ denotes the Frobenius norm. The minimization condition leads to a differential equation for $Y$ on the low-rank manifold, which has to be solved numerically. Standard integrators fail due to the presence of small singular values. Therefore, a particular integrator, the so-called projector-splitting integrator was developed in \cite{LO14}. Its convergence for non-stiff $G$ and its robustness with respect to the small singular values was proven in \cite{KLW16}. 

Condition \eqref{min} states that $\dot{Y}(t)$ is obtained by an orthogonal projection of $G(t,Y(t))$ onto the tangent space $\mathcal{T}_{Y(t)}\mnf$, i.e.
\[ \dot{Y}(t) = P(Y(t))G(t,Y(t)), \label{proj}\]
where $P(Y)$ is this orthogonal projection. The matrix $Y(t)$ is represented as $U(t)S(t)V(t)\trasp$, where $U,V \in \mathbb{R}^{d \times r}$ with orthonormal columns and $S \in \mathbb{R}^{r \times r}$. Note that $S$ is not assumed to be diagonal. Using the explicit form of the projector, which was computed in \cite{KL07}, the evolution equation for the approximation $Y$ is seen to be
\begin{equation} \label{eq_proj}
\dot{Y} = G(t,Y)VV\trasp -UU\trasp G(t,Y) VV\trasp +UU\trasp G(t,Y).\end{equation}
The projector-splitting integrator itself makes use of splitting methods. The right-hand side of \eqref{eq_proj} is split up into three subproblems. The integrator only deals with differential equations for the low-rank factors $U,S$ and $V$.
The practical algorithm for a general nonlinearity $G(t,Y)$ is given in \cite[Sect.~2.2]{KLW16}.
Starting from $U_0S_0V_0\trasp$, a rank-$r$ approximation to the initial data $N(t_0)$, one step of the integrator is as follows.
\begin{enumerate}
	\item[a.] Solve $\dot{K}(t) = G(t,K(t)V_0\trasp)V_0$ with initial value $K(t_0)= U_0 S_0$.
	Then orthonormalize $K(t_1)$ by a QR decomposition to get $K(t_1) = U_1\widehat{S}_1$, where $U_1 \in \mathbb{R}^{d \times r}$ has orthonormal columns and $\widehat{S}_1\in \mathbb{R}^{r \times r}$.
	\item[b.] Solve $ \dot{S}(t) = -U_1 \trasp G(t, U_1 S(t)V_0 \trasp) V_0$ with initial value $S(t_0)= \widehat{S}_1$.
	Set $\widetilde{S}_0 = S(t_1)$.
	\item[c.] Solve $\dot{L}(t) = G(t,U_1L(t)\trasp)\trasp U_1$ with initial value $L(t_0)=V_0\widetilde{S}_0\trasp$.
	Then orthonormalize $L(t_1)$ by a QR decomposition to get $L(t_1) = V_1S_1\trasp$, where $V_1 \in \mathbb{R}^{d \times r}$ has orthonormal columns and $S_1\in \mathbb{R}^{r \times r}$.
\end{enumerate}
The above scheme can be tailored to the particular form of $G$. More details are given in the sections below. 
A detailed convergence analysis of the proposed integrator will be presented elsewhere.


\section{Differential Lyapunov equations}\label{sect:DLE}
Differential Lyapunov equations arise for $G(t,X) = Q$ with $Q$ being a constant matrix. Thus, we consider the following initial value problem
\begin{equation} \label{DLE}
\begin{aligned}
\dot{X}(t) & = A X(t) + X(t)A\trasp + Q, \\
X(t_0) & = X_0 \, ,
\end{aligned}
\end{equation}
where $X(t)$, $A$, $Q$ $\in \mathbb{R}^{d \times d}$, and $t \in [t_0,T]$. The matrix $A$ typically arises from the discretization of a differential operator. Further, $Q$ and the initial data $X_0$ are symmetric and positive semidefinite.
Since \eqref{DLE} is a linear differential equation with constant coefficients the solution exists for all times. Moreover, the solution is also symmetric and positive semidefinite. This is a straightforward consequence of~\eqref{eq:voc}, see also \cite{AboFIJ03}.

\subsection{A low-rank split-step integrator}  \label{integrator_lyap}
As explained in Section \ref{integrator} we split \eqref{DLE} into the following two subproblems:
\begin{subequations}
	\begin{align}
	\label{sub1_lyap} \dot{M}(t) = AM(t)+M(t)A\trasp, \quad & M(t_0) = M_0, \\
	\label{sub2_lyap} \dot{N}(t) = Q, \quad & N(t_0)= N_0, \end{align}
\end{subequations}
where $t \in [t_0,T]$.
Note that the exact solution of \eqref{sub2_lyap} simply is $N(t)=N_0+~tQ$. Employing this representation directly in our splitting method, however, would require one additional SVD per time step. To avoid this computational overhead, we follow the approach chosen in \cite{KLW16} and use the projector-splitting method.

Since the solution of \eqref{DLE} is symmetric and positive semidefinite we represent it as a rank-$r$ matrix of the form $USU\trasp$, where $U \in \mathbb{R}^{d \times r}$ has orthonormal columns and $S \in \mathbb{R}^{r \times r}$ is symmetric. This is an SVD-like decomposition with the additional constraint of symmetry.

The linear problem \eqref{sub1_lyap} can be treated as explained in Section \ref{integrator}.
The projector-splitting integrator for the solution of \eqref{sub2_lyap} in its standard formulation, however, does not preserve the symmetry and positive semidefiniteness of the solution.
Let $U_0 S_0 U_0 \trasp$ be a low-rank decomposition of the initial data $N_0$. In order to preserve the symmetry of the problem we propose a modification of the algorithm given at the end of Section \ref{integrator}, see also \cite[Sect.~2.2]{KLW16}. The modified projector-splitting integrator is described in step~\ref{step5} of Algorithm~\ref{algorithm_Lie}.
In the first substep we update the value of the left-sided orthonormal factor $U_0$ to $U_1$, whereas we keep the right-sided one till the last step. There we get $L(t_1) = S_1\widetilde {U}_1\trasp$. Imposing $\widetilde{U}_1 = U_1$ and taking advantage of the orthonormality of $U_1$ we obtain $S_1 = L(t_1)U_1$. The low-rank approximation $N_1$ at $t_0+\tau$ is then simply $U_1S_1U_1\trasp$.

In Algorithm \ref{algorithm_Lie} we summarize the above proposed Lie splitting for the solution of \eqref{DLE}. Note that the flows of all these substeps can be computed exactly. Moreover, the whole time integration is performed by working only with the low-rank factors of the solution, which has several computational advantages such as less computing time and less memory requirements.
By using the explicit structure of the subproblems, it is easy to show that $S_1$ and consequently $N_1$ are symmetric and positive semidefinite.

\begin{algorithm}
	\caption{The first-order low-rank split-step integrator for DLEs}
	\label{algorithm_Lie}
	\begin{algorithmic}[1]
		\State Compute a symmetric rank-$r$ approximation $M_0$ to the given initial data $X_0$: $M_0 = U_0 S_0 U_0\trasp$.
		\State Let $t_k = t_0+k\tau$ for $k \in \mathbb{N}$ and set $n = 0$.
		\State Compute $\ee^{\tau A} U_n$ and orthogonalize it by a QR decomposition to get $\ee^{\tau A} U_n = U_n^A R$, where $U_n^A \in \mathbb{R}^{d \times r}$ and $R \in \mathbb{R}^{r \times r}$.
		\State Define $S_n^A = RS_nR\trasp$.
		\State Given $U_n^A$ and $S_n^A$ perform one integration step:\label{step5}
		\begin{enumerate}
			\item[a.]  Solve $\dot{K}(t) = QU_n^A$ with initial value $K(t_n)= U_n^A S_n^A$.
			Then orthogonalize $K(t_{n+1})$ by a QR decomposition and set $U_{n+1}\widehat{S}_{n+1} = K(t_{n+1})$, where $U_{n+1} \in \mathbb{R}^{d \times r}$ has orthonormal columns and $\widehat{S}_{n+1}\in \mathbb{R}^{r \times r}$.
			\item[b.] Solve $\dot{S}(t) = -U_{n+1}\trasp QU_n^A$ with initial value $S(t_n)= \widehat{S}_{n+1}$.
			Then set $\widetilde{S}_n = S(t_{n+1})$.
			\item[c.] Solve $ \dot{L}(t) = U_{n+1}\trasp Q$ with initial value $L(t_n)=\widetilde{S}_n(U_n^A)\trasp$. Then set $S_{n+1} = L(t_{n+1})U_{n+1}$.
		\end{enumerate}
		\State Increase $n$ by 1.
		\State Go to 3 and repeat as long as $t_n < T$.
	\end{algorithmic}
\end{algorithm}

\begin{lemma} \label{lemma}
	The matrix $S_n$ arising in step~\ref{step5}c of Algorithm \ref{algorithm_Lie} is symmetric and positive semidefinite.
\end{lemma}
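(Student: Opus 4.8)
The plan is to prove the statement by induction on $n$, the decisive feature being that every subproblem in step~\ref{step5} is a linear ODE with a \emph{constant} right-hand side (because $Q$, $U_n^A$ and $U_{n+1}$ do not depend on $t$). Hence each flow over a step of length $\tau$ can be written down in closed form, and this lets me track the $S$-factors explicitly and read off both desired properties directly from the resulting expression for $S_{n+1}$.

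For the base case, $S_0$ is the core factor of the symmetric positive semidefinite rank-$r$ approximation $M_0 = U_0 S_0 U_0\trasp$ to the symmetric positive semidefinite initial datum $X_0$, so $S_0$ inherits both properties. For the inductive step I would first observe that if $S_n$ is symmetric and positive semidefinite, then so is $S_n^A = R S_n R\trasp$, being a congruence transform of $S_n$. I would then integrate the three substeps in turn. Step~a gives $K(t_{n+1}) = U_n^A S_n^A + \tau\, Q U_n^A$, so that $\widehat{S}_{n+1} = U_{n+1}\trasp U_n^A S_n^A + \tau\, U_{n+1}\trasp Q U_n^A$. The crucial observation arises in step~b: the constant forcing $-U_{n+1}\trasp Q U_n^A$ integrated over $\tau$ cancels exactly the second summand of $\widehat{S}_{n+1}$, leaving the clean expression $\widetilde{S}_n = U_{n+1}\trasp U_n^A S_n^A$. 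Finally step~c yields, with $W = U_{n+1}\trasp U_n^A$,
\begin{equation*}
S_{n+1} = \widetilde{S}_n\,(U_n^A)\trasp U_{n+1} + \tau\, U_{n+1}\trasp Q U_{n+1} = W S_n^A W\trasp + \tau\, U_{n+1}\trasp Q U_{n+1}.
\end{equation*}

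From this closed form the conclusion is immediate. The term $W S_n^A W\trasp$ is symmetric positive semidefinite because $S_n^A$ is and congruence preserves both properties, while $U_{n+1}\trasp Q U_{n+1}$ is symmetric positive semidefinite because $Q$ is; a sum of two such matrices is again symmetric positive semidefinite, which closes the induction and establishes the claim for every iterate.

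I expect the only genuine work to be the algebraic bookkeeping of the three substeps, and in particular the verification of the cancellation in step~b that collapses $\widetilde{S}_n$ to $U_{n+1}\trasp U_n^A S_n^A$. Once that closed form for $S_{n+1}$ is in hand, symmetry and positive semidefiniteness follow with no further effort, since they reduce to the elementary facts that congruence preserves these properties and that they are stable under addition.
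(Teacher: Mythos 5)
Your proof is correct and follows essentially the same route as the paper: explicit integration of the three constant-coefficient substeps, the cancellation in step~b that reduces $\widetilde{S}_n$ to $U_{n+1}\trasp U_n^A S_n^A$, and the resulting closed form $S_{n+1} = W S_n^A W\trasp + \tau\, U_{n+1}\trasp Q U_{n+1}$ from which symmetry and positive semidefiniteness are immediate. The only difference is presentational: you spell out the induction and the congruence step for $S_n^A = R S_n R\trasp$, which the paper compresses into ``without loss of generality $n=0$.''
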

\begin{proof}
	Without loss of generality we prove the lemma for $n=0$. For simplicity, we will also omit the upper index $A$ and write $U_0$ instead of $U_0^A$, etc.
	Starting from the rank-$r$ approximation $U_0S_0U_0\trasp$ of $N_0$, the first step of the projector-splitting integrator gives
	\begin{equation} \label{first_expl}
	U_1\widehat{S}_1 = U_0S_0+\tau QU_0.
\end{equation}
	The solution of the second substep is
	\begin{equation*} 
	\widetilde{S}_0 = \widehat{S}_1-\tau U_1 \trasp Q U_0. 	
	\end{equation*}
	From step~\ref{step5}c we get
	\begin{equation}\label{second_expl}
    S_1 = \widetilde{S}_0 U_0\trasp  U_1 + \tau U_1 \trasp Q  U_1.
    \end{equation}
	Using the orthonormality of $U_1$, we compute $\widehat{S}_1 = U_1\trasp U_0S_0+\tau U_1 \trasp QU_0$ from \eqref{first_expl}. Inserting this formula into \eqref{second_expl} we get the symmetric formula
	\[ S_1 = U_1\trasp U_0 S_0 U_0\trasp U_1 + \tau U_1 \trasp Q  U_1\]
	which proves the assertion.
\end{proof}

\subsection{Numerical results for a parabolic problem} \label{example_dle}
In order to illustrate the behavior of the integrator, we take a simple test problem.
We consider the heat equation
\[\partial_t w = \Delta w, \qquad w|_{\partial \Omega}=0\]
on $\Omega = (0,1)^2$ with homogeneous Dirichlet boundary conditions.
The associated DLE is of the form \eqref{DLE}, where
$A$ arises from the spatial discretization of the Laplacian. Thus, we have $A = \widetilde{A} \otimes I + I \otimes \widetilde{A}$, where $\widetilde{A}$ is the 1D standard centered finite differences matrix with $\widetilde{d}$ uniformly spaced grid points in the interior. In the following we take $\widetilde{d} = 20$ and obtain $d = \widetilde{d}^2 = 400$.
The matrix $Q$ in our example has random coefficients and is of rank 5. The initial value $X_0$ is chosen to be a random matrix of rank 10. 
This choice ensures the low-rank behaviour of the solution of the DLE. This can be observed in Figure \ref{fig:lyap_rank_lie}, left where we plot the first 90 singular values of a numerical solution computed with DOPRI5 \cite{HNW93} at the final integration time $T=0.1$. This solution is taken as the reference solution for all tests in this section. The code DOPRI5 is an explicit Runge--Kutta method of order~5 with adaptive step size strategy.

\begin{figure}[t]
	\begin{minipage}[b]{.5\textwidth}
		\centering
		\includegraphics[width = \columnwidth]{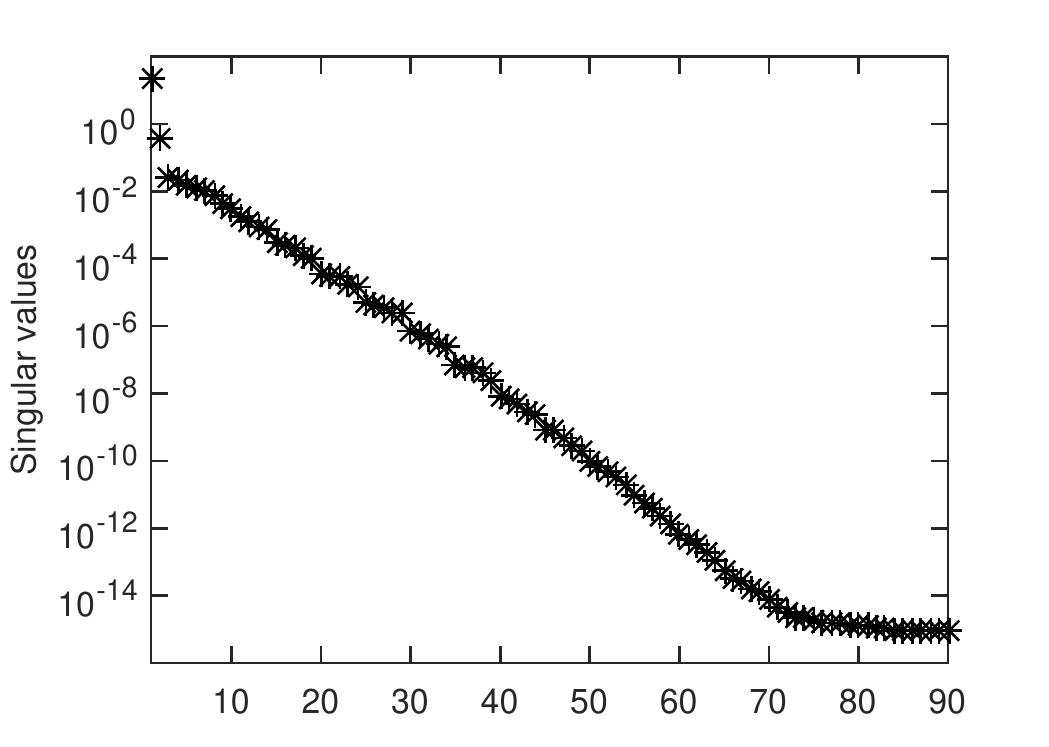}
	\end{minipage}
	\hspace{2mm}
	\begin{minipage}[t]{.5\textwidth}
		\centering
		\includegraphics[width = \columnwidth]{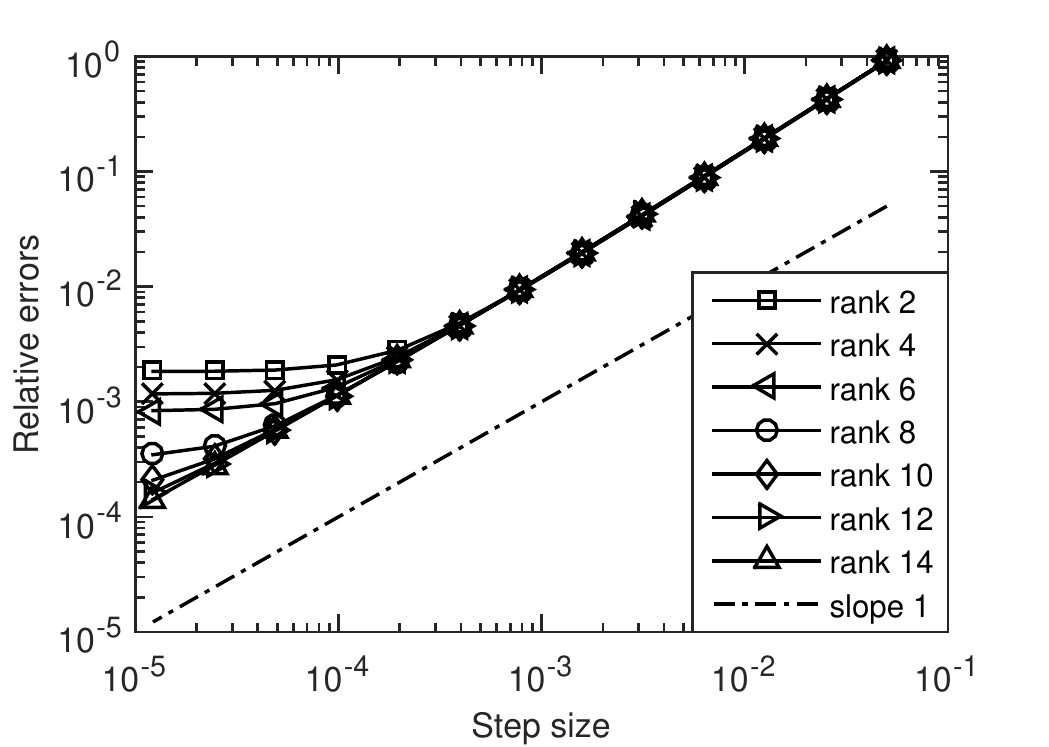}
	\end{minipage}
	\caption{Results for the DLE of Section \ref{example_dle} for $d= 400$. Left: First 90 singular values of the reference solution computed with DOPRI5 at $T=0.1$. Right: Errors of Lie splitting described in Algorithm \ref{algorithm_Lie} in the Frobenius norm \eqref{norm} as a function of step size and rank at $T=0.1$.}
	\label{fig:lyap_rank_lie}
\end{figure}

\begin{figure}[t]
	\begin{minipage}[t]{.5\textwidth}
		\centering
		\includegraphics[width = \columnwidth]{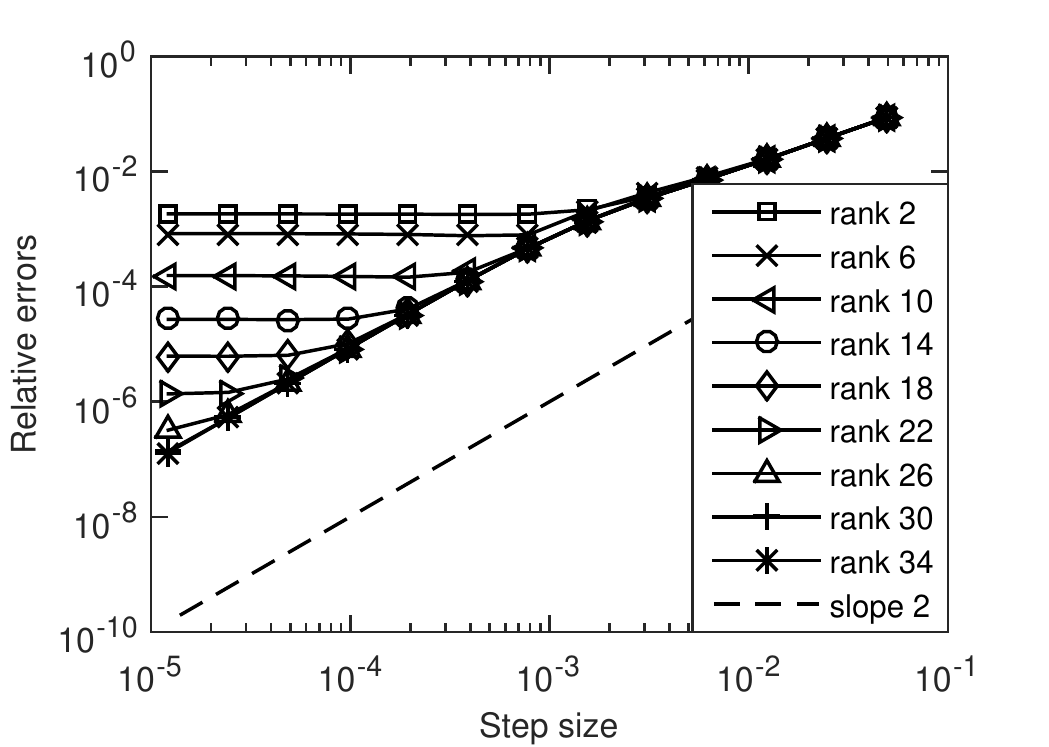}
	\end{minipage}
	\hspace{2mm}
	\begin{minipage}[t]{.5\textwidth}
		\centering
		\includegraphics[width = \columnwidth]{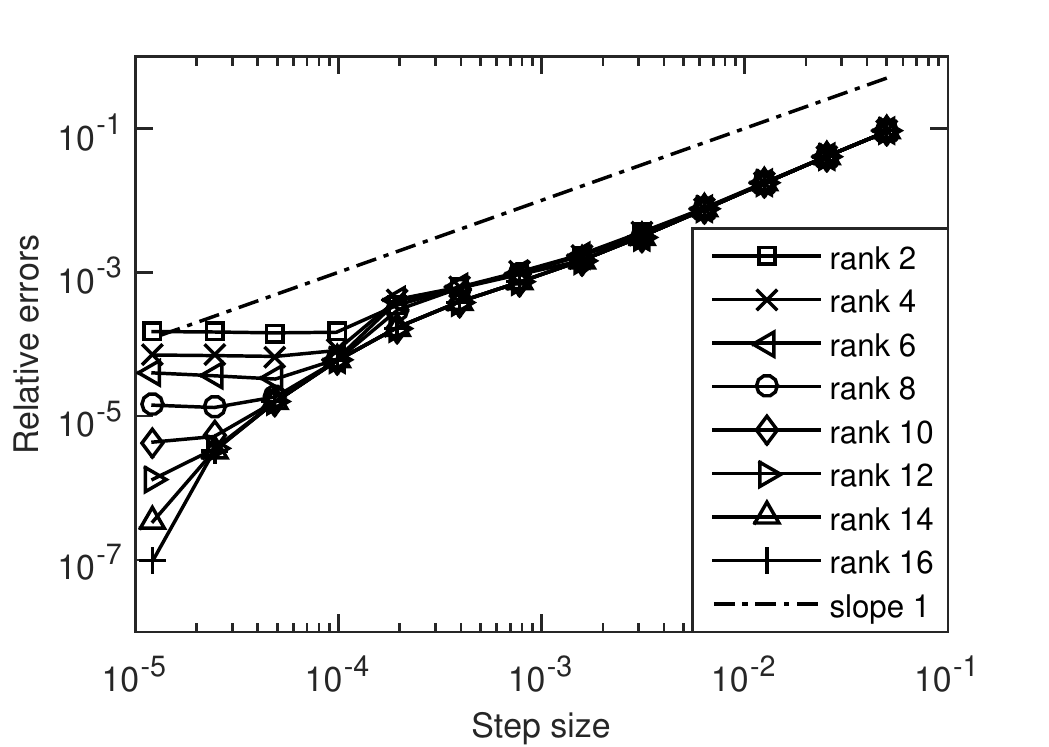}
	\end{minipage}
	\caption{Errors of Strang splitting for the solution of the DLE of Section \ref{example_dle}. The errors are measured in the Frobenius norm \eqref{norm} as a function of step size and rank at $T=0.1$. Left: $d = 400$, right: $d = 3600$.}
	\label{fig:lyap_strang}
\end{figure}

In Figure \ref{fig:lyap_rank_lie}, right we show the error behaviour of the Lie splitting \eqref{Lie} as described in Algorithm \ref{algorithm_Lie}. The errors are measured in an appropriately scaled Frobenius norm
\begin{equation} \label{norm}
\lVert Z \rVert_F =  \frac{1}{d} \sqrt{\sum_{i,j = 1}^{d}Z_{ij}^2}, \qquad Z \in \mathbb{R}^{d \times d}.
\end{equation}
We observe that the error of our method is composed by two different contributions, the error due to the outer splitting into \eqref{sub1_lyap} and \eqref{sub2_lyap} and the error due to the low-rank approximation. As long as the error due to the low-rank approximation is not dominant, we observe the expected order of convergence one for Lie splitting. On the other hand, if the low-rank approximation is poor, decreasing the step size will not improve the quality of the solution. We observe a stagnation of the error around certain values depending on the rank. These errors can be related with the magnitude of the first singular value discarded for each choice of the approximation rank, as can be observed by comparing Figure \ref{fig:lyap_rank_lie}, left and right.
In Figure \ref{fig:lyap_strang} we show the corresponding results for Strang splitting for $d = 400$ and $d = 3600$, respectively. On the left we observe the expected order of convergence two for the outer splitting. Again, when the approximation rank is too low, the outer error becomes independent of any step size refinement.
An order reduction shows up in the figure on the right. It is due to the fact that the inhomogeneity $Q$ is not in the domain of the Laplacian. Therefore, $AQ$ cannot be bounded independently of $d$, see \cite{EO15}.

In Table \ref{tab:sym} we list the defects in symmetry
\begin{equation} \label{def_sym}
d_{\text{sym}} = \frac{\lVert Y-Y\trasp \rVert_F}{\lVert Y_{\text{ref}}\rVert_F}, \end{equation}
where $Y$ is the solution obtained with Lie splitting as defined in Algorithm \ref{algorithm_Lie} and $Y_{\text{ref}}$ is the reference solution. We observe that the symmetry is numerically preserved, as expected from Lemma \ref{lemma}.
In Table \ref{tab:non_sym} we show that the standard non-symmetric integrator as presented in Section \ref{integrator} is not symmetry-preserving. It is therefore necessary to modify the projector-splitting as explained in detail in Algorithm \ref{algorithm_Lie}.

Due to Lemma~\ref{lemma}, our integrator also preserves positive semidefiniteness of the solution. To show this, we denote with $\widehat{Y}$ the nearest symmetric positive semidefinite matrix to $Y$ obtained by the method proposed in \cite{H88}. In Table \ref{tab:spd} we list the defects in positive semidefiniteness
\begin{equation} \label{def_psd}
d_{\text{psd}} = \frac{\lVert Y-\widehat{Y} \rVert_F}{\lVert Y_{\text{ref}}\rVert_F}
\end{equation}
for different approximation ranks and step sizes.

\begin{table}[h]
	\centering
	\begin{tabular}{c|c|c|c|c|c}
		& ns = $2$ &  ns = $2^4$ &  ns = $2^7$ &  ns = $2^{11}$ &  ns = $2^{13}$ \\\hline
		rk = 2 & 4.7294e-16&1.6989e-16&1.5067e-16&7.3221e-16&5.1379e-16\\\hline
		rk = 4 & 7.7743e-16&3.8092e-16&4.9242e-16&1.0861e-15&2.7226e-15\\\hline
		rk = 6 & 1.0498e-15&3.4863e-16&5.0365e-16&3.2494e-16&3.9935e-16\\\hline
		rk = 8 & 1.0781e-15&4.1657e-16&3.9016e-16&2.6837e-15&1.2762e-14\\\hline
		rk = 10 & 8.2787e-16&7.7903e-16&1.3127e-15&1.7855e-15&2.8277e-15\\\hline
		rk = 12 & 1.1677e-15&4.6731e-16&6.3586e-16&2.1097e-15&6.9804e-15\\\hline
		rk = 14 & 1.8577e-15&5.5461e-16&8.0892e-16&1.9972e-15&2.5059e-15\\
	\end{tabular}
	\caption{Defects in symmetry for Algorithm \ref{algorithm_Lie} applied to the DLE of Section \ref{example_dle} for $d= 400$. The defect \eqref{def_sym} is displayed as a function of the approximation rank (rk) and the number of time steps (ns) at $T=0.1$.}
	\label{tab:sym}
\end{table}

\begin{table}[ht]
	\centering
	\begin{tabular}{c|c|c|c|c|c}
		& ns = $2$ &  ns = $2^4$ &  ns = $2^7$ &  ns = $2^{11}$ &  ns = $2^{13}$ \\\hline
		rk = 2 & 9.5178e-03&3.2545e-03&4.2299e-04&4.3455e-05&5.2796e-06\\\hline
		rk = 4 & 1.2097e-02&3.8332e-03&2.4587e-04&2.0065e-05&2.3394e-06\\\hline
		rk = 6 & 2.3350e-04&4.1849e-03&2.6199e-04&2.0735e-05&2.3876e-06\\\hline
		rk = 8 & 1.2875e-05&8.0839e-04&1.9463e-04&1.5981e-05&1.8315e-06\\\hline
		rk = 10 & 2.5457e-07&1.5020e-04&1.0291e-04&8.1713e-06&9.2641e-07\\\hline
		rk = 12 & 1.3272e-09&5.9891e-05&4.7831e-05&3.9946e-06&4.5438e-07\\\hline
		rk = 14 & 1.0354e-12&5.5624e-06&1.8540e-05&1.7506e-06&1.9895e-07\\
	\end{tabular}
	\caption{Defects in symmetry for Lie splitting of Section \ref{integrator} applied to the DLE of Section \ref{example_dle} for $d= 400$ without symmetry-preserving modification. The defect \eqref{def_sym} is displayed as a function of the approximation rank (rk) and the number of time steps (ns) at $T=0.1$.}
	\label{tab:non_sym}
\end{table}

\begin{table}[ht]
	\centering
	\begin{tabular}{c|c|c|c|c|c}
		& ns = $2$ &  ns = $2^4$ &  ns = $2^7$ &  ns = $2^{11}$ &  ns = $2^{13}$ \\\hline
		rk = 2 & 4.9230e-15&2.7584e-15&2.7871e-15&4.3101e-15&2.3554e-15\\\hline
		rk = 4 & 4.1822e-15&4.6147e-15&3.2650e-15&5.4743e-15&3.2069e-15\\\hline
		rk = 6 & 5.9181e-15&1.9928e-15&2.5106e-15&3.0357e-15&4.0661e-15\\\hline
		rk = 8 & 5.1317e-15&2.6077e-15&1.7991e-15&3.0388e-15&6.6778e-15\\\hline
		rk = 10 & 7.9000e-15&3.3707e-15&2.7210e-15&3.3223e-15&2.3704e-15\\\hline
		rk = 12 & 5.7540e-15&2.2482e-15&2.0631e-15&2.6968e-15&5.7000e-15\\\hline
		rk = 14 & 4.2693e-15&7.0395e-15&2.6659e-15&2.0693e-15&3.2132e-15
	\end{tabular}
	\caption{Defects in positive semidefiniteness for Algorithm \ref{algorithm_Lie} applied to the DLE of Section \ref{example_dle} for $d= 400$. The defect \eqref{def_psd} is displayed as a function of the approximation rank (rk) and the number of time steps (ns) at $T=0.1$.}
	\label{tab:spd}
\end{table}

\subsection{Comparison with a standard approach} \label{comparison}
In order to give more insight into the performance of the low-rank splitting described in Algorithm \ref{algorithm_Lie} we carry out a comparison with a standard method. 
The basic idea is to discretize the DLE in time  to convert the differential problem into an algebraic one.
Since our aim is to compare the first-order low-rank splitting, we apply here the backward Euler method with time step $\tau$ to obtain the numerical approximation $X_{n+1}$ to $X(t_{n+1})$. This gives   
\[ X_{n+1} = X_n + \tau \left(A X_{n+1} + X_{n+1}A\trasp + Q\right).  \] 
After recombining the terms, we end up with the following algebraic Lyapunov equation (ALE)
\begin{equation} \label{ALE}
0 = (X_n + \tau Q) + \left(\tau A-\frac{1}{2}I\right)X_{n+1}+X_{n+1}\left( \tau A\trasp-\frac{1}{2}I\right).
\end{equation} 
Several approaches are available in the literature for solving this type of equations. We employ the one proposed in \cite{S07}. 
It consists of projecting the ALE onto an approximation space, generated as combination of Krylov subspaces in $A$ and $A^{-1}$. Such an approach is also called extended Krylov method. The reduced equation is then solved by means of a direct solver. 

In particular, consider an ALE of the form
\begin{equation} \label{factor_ale} 0 = \widetilde{A}X+X\widetilde{A}\trasp +\widetilde{B} \widetilde{B} \trasp, \end{equation}
with $\widetilde{A} \in \mathbb{R}^{d\times d} $ and $\widetilde{B} \in \mathbb{R}^{d \times s}$, $s \ll d$.  
Let us denote with $\widetilde{V}$ the matrix whose columns span the approximation space and with $Y$ the solution of the projected ALE.
Then the solution of the original ALE is reconstructed as \[X = \widetilde{V} Y \widetilde{V}\trasp.\]
A low-rank solution is generated discarding all the eigenvalues of $Y$ which are smaller than a certain tolerance \texttt{tolY}. Then the solution $X$ is given in low-rank form as $ZZ\trasp$. 
A second user-supplied parameter, denoted by \texttt{tol}, is used for the stopping criterion of the algorithm. The iteration is stopped if
\[ 
\frac{\lVert \widetilde{A}X^m+X^m\widetilde{A}\trasp +\widetilde{B} \widetilde{B} \trasp \rVert_2}{2 \lVert \widetilde{A}\rVert_F \lVert Y^m\rVert_F +\lVert \widetilde{B}\rVert_F^2} \leq \texttt{tol},
\] 
where the superscript $m$ denotes the number of the iteration. The algorithm associated with these procedure is called K-PIK (Krylov-plus-inverted Krylov). For further details we refer to \cite{S07}. 

In the following we report some numerical experiments for the example we considered in the beginning of Section \ref{example_dle}.
We solve it until $T=0.1$. An algebraic equation of the form \eqref{ALE} has to be solved for each time step. 
In order to use the K-PIK procedure we set 
\[ \widetilde{A} = \tau A -\frac{1}{2}I \quad \text{and} \quad \widetilde{B} = [\sqrt{\tau} R \; \; Z_n], \]
where $Z_n$ is the low-rank factor of the solution $X_n$ at time $t_n$, i.e., $X_n = Z_nZ_n\trasp$ and $R$ is the low-rank factor of $Q$, i.e., $Q = R R\trasp$. 

In Figure \ref{fig:sim_lyap} we illustrate the behaviour of this integrator for different values of \texttt{tolY}. The results are all obtained in Matlab with $\texttt{tol} = 10^{-12}$. The projected ALE is solved with Matlab's $\texttt{\textsf{lyap}}$ function. Moreover, as a comparison, we compute the solution of \eqref{DLE} by directly solving \eqref{ALE} by the routine \texttt{lyap}. Since in both the cases the problem is discretized in time in the same way, this comparison reduces to a comparison between the K-PIK procedure and the \texttt{lyap} method. 
In Figure \ref{fig:sim_lyap}, left we show the error behaviour of the two methods. The K-PIK procedure is tested for different values of \texttt{tolY}. The error we observe is basically just the first order error of the 
backward Euler method. Only when the time step becomes very small ($\tau \approx 10^{-6}$) we observe an additional error if \texttt{tolY} is not chosen sufficiently small. This is basically the error due to the neglect of some eigenvalues of the solution. 
In Figure \ref{fig:sim_lyap}, right we observe how the choice of \texttt{tolY} influences the rank of the solution. The full rank given by the solution computed with \texttt{lyap} is recovered by the K-PIK procedure with \texttt{tolY}= $10^{-12}$.

In Figure \ref{fig:comparison} we compare the results obtained with the Lie splitting approach described in Algorithm \ref{algorithm_Lie} with the ones obtained from the combination of the backward Euler method with the K-PIK procedure. 
In Figure \ref{fig:comparison}, left the relative errors for the two methods are shown. We observe that the first-order splitting method is more accurate than the backward Euler method for this example. This is due to the fact that we solve the linear subproblem exactly. 

\begin{figure}[h]
	\begin{minipage}[t]{.5\textwidth}
		\centering
		\includegraphics[width = \columnwidth]{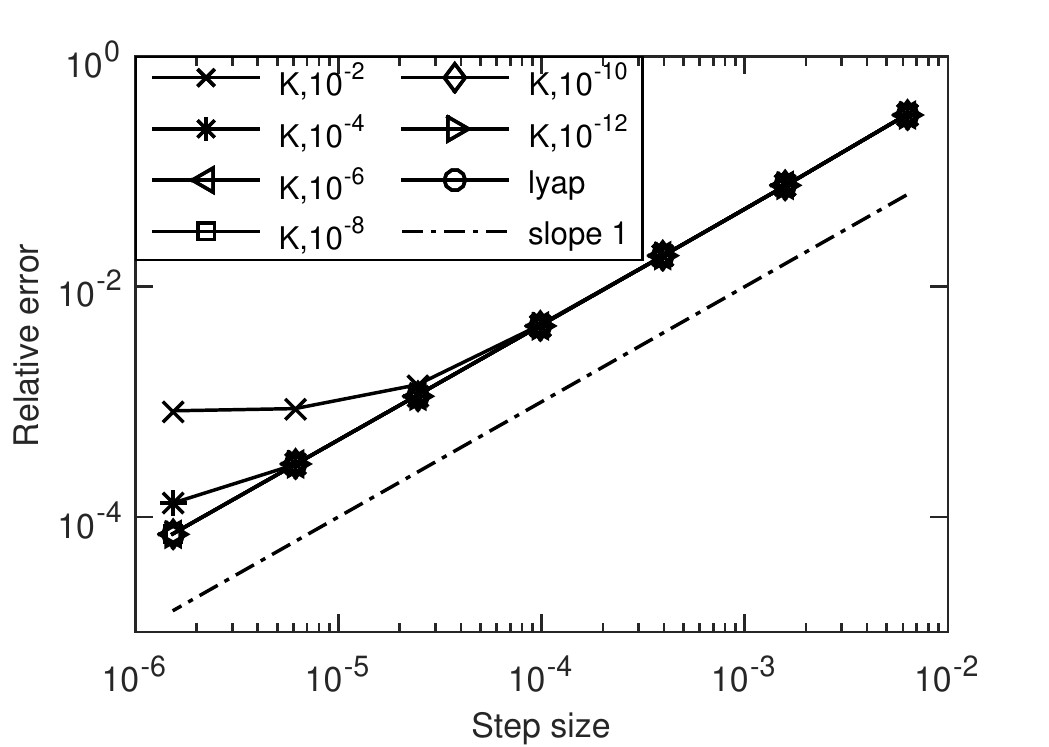}
	\end{minipage}
	\hspace{2mm}
	\begin{minipage}[t]{.5\textwidth}
		\centering
		\includegraphics[width = \columnwidth]{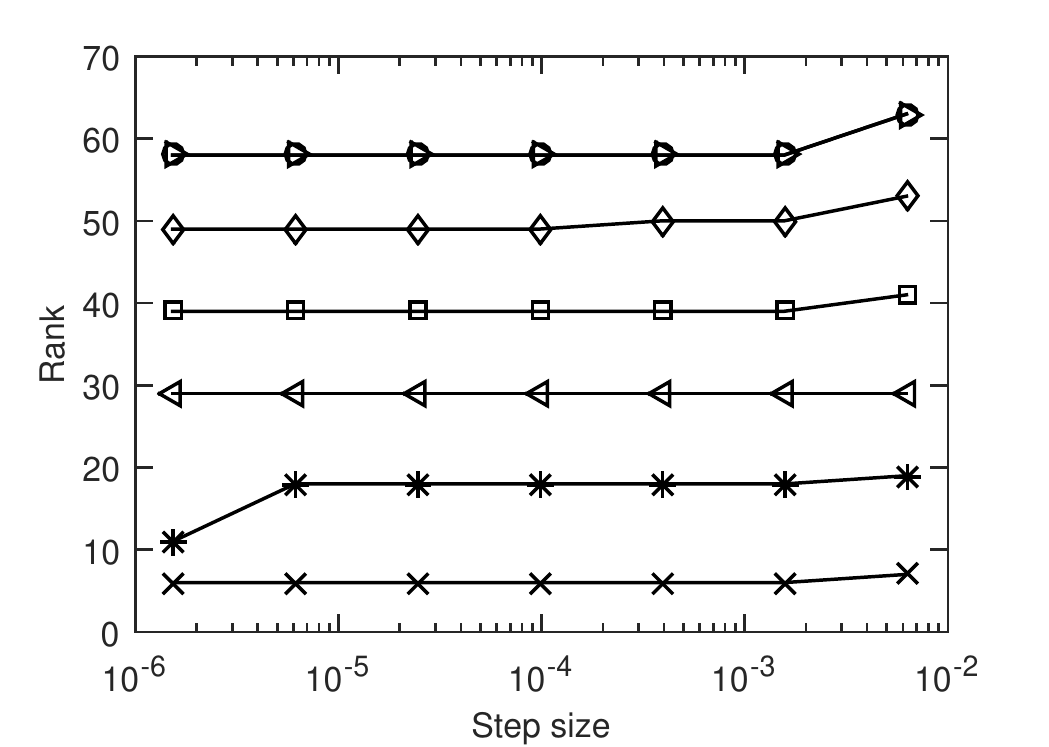}
	\end{minipage}
	\caption{Comparison between the K-PIK procedure for different values of \texttt{tolY} (K, \texttt{tolY}) and \texttt{lyap}. Left: Errors in Frobenius norm \eqref{norm} as a function of step size  at $T = 0.1$. Right: Rank as a function of the step size at $T=0.1$.}
	\label{fig:sim_lyap}
\end{figure}

\begin{figure}[h]
	\begin{minipage}[t]{.5\textwidth}
		\centering
		\includegraphics[width = \columnwidth]{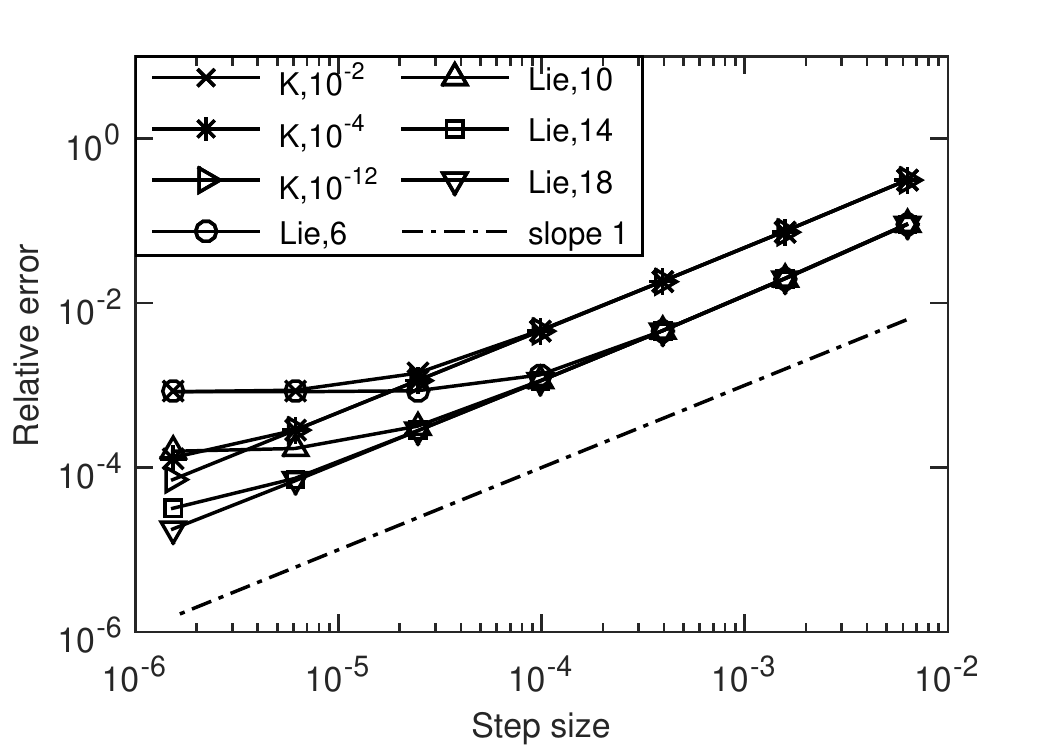}
	\end{minipage}
	\hspace{2mm}
	\begin{minipage}[t]{.5\textwidth}
		\centering
		\includegraphics[width = \columnwidth]{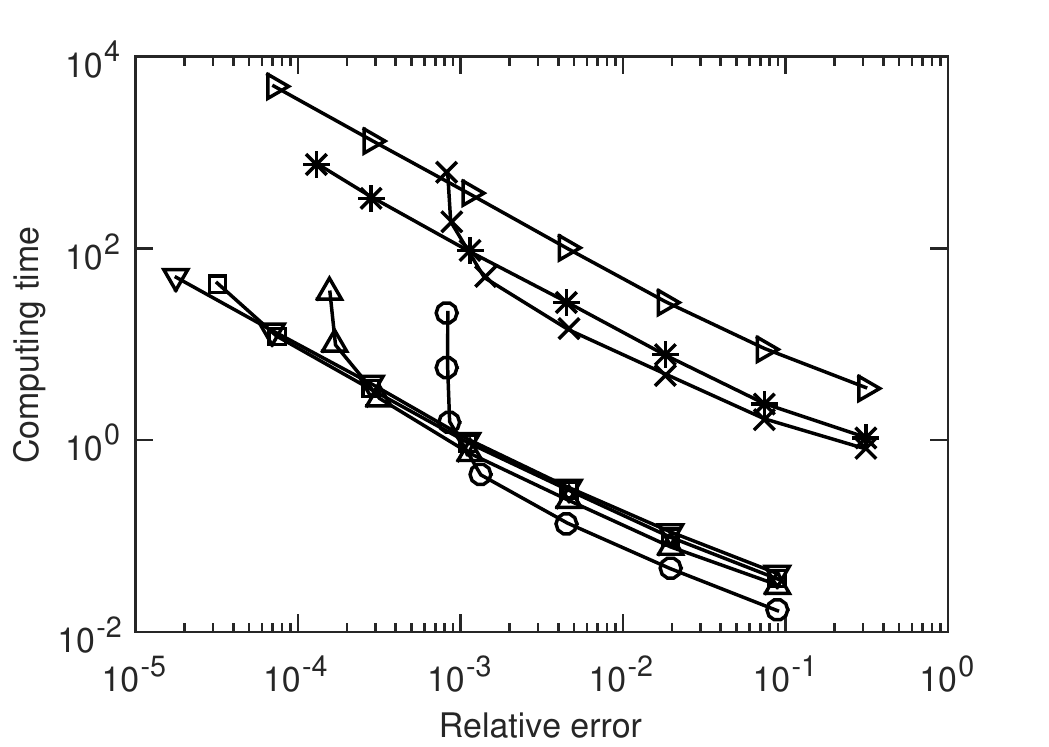}
	\end{minipage}
	\caption{Comparison between the Lie splitting described in Algorithm \ref{algorithm_Lie} for different ranks (Lie, rank) and the backward Euler method combined with the K-PIK procedure for different values of \texttt{tolY} (K, \texttt{tolY}). Left: Errors in the Frobenius norm \eqref{norm} as a function of step size at $T = 0.1$. Right: Computing time as a function of the error at $T = 0.1$.}
	\label{fig:comparison} 
\end{figure}

In Figure \ref{fig:comparison}, right the computing time at $T =0.1$ is displayed. 
The solution of the ALE \eqref{ALE} is computed by the K-PIK procedure, where the Choleski decomposition of the matrix $\widetilde{A}$ is computed once and for all. The computation of the matrix exponential required for the Lie splitting in Algorithm \ref{algorithm_Lie} was carried out by Leja interpolation \cite{CKOR16} with single precision. We conclude that the splitting proposed here is considerably faster than the backward Euler method combined with the K-PIK approach.

\subsection{Numerical results for the simulation of El Ni\~no}
The quasiperiodic weather phenomenon El Ni\~no which is characterized by an unusual warming of the sea surface in the Indo-Pacific ocean, has a huge impact on the climate worldwide and is also responsible for many natural disasters.  Different models (both deterministic and stochastic) are used to describe the variation in the sea surface temperature. We consider in the following a stochastic advection equation driven by additive noise
\begin{align}
\dd X(t) = \mathcal{A} X(t) + F(t) , \qquad t\in [t_0,T],
\end{align} 	
where the vector $X$ contains the sea surface temperature (SST) anomalies, the operator $\mathcal{A} = u \cdot \nabla$ describes the advection by the ocean currents $u$, and $F$ is a random vector which aggregates external forces like wind stress or evaporation \cite{C09,PS95}. Following \cite{PS95} we construct the SST anomalies, using the dataset OISST \cite{cisl}, whereas for the ocean currents we use the dataset OSCAR \cite{OSCAR}, both from the National Oceanic and Atmospheric Administration (NOAA). We model the random force $F$ by a Gaussian white noise process with zero mean and covariance operator $\mathcal{Q}$ and compute the first two moments by
\begin{align}
\dd\widehat{X}(t) &= \mathcal{A} \widehat{X}(t) , \label{exp} \\
\dd \mathcal{P}(t) &= \mathcal{A} \mathcal{P}(t) + \mathcal{P}(t) \mathcal{A}\trasp + \mathcal{Q}, \label{lde}
\end{align}
where $t \in[t_0,T]$ and we denote by $\widehat{X}$ the expectation and by $\mathcal{P}$ the covariance of $X$ \cite{MP17}. As the solution $X$ is a Gaussian random field, it is completely defined by its second-order statistics.

We discretize the operator $\mathcal{A}$ via centered finite differences and obtain the matrix $A$. The first subproblem \eqref{exp} is solved by a method based on Leja interpolation. Discretizing the Indo-Pacific ocean we get $3900$ grid points, hence solving the large-scale differential Lyapunov equation \eqref{lde} with full rank is rather expensive. For such computations, our splitting algorithm has substantial advantages. It requires considerably less computing time and memory. In order to illustrate its accuracy, we make a comparison of the proposed algorithm with a second order scheme. Similarly as in Section \ref{comparison} we apply the backward Euler method to the DLE \eqref{lde} and solve the resulting ALE by the K-PIK procedure. In order to obtain a second order scheme we then use Richardson extrapolation. Given $Z_n$ which is a low-rank factor of $P_n$, we make one step with step size $\tau$ and get an approximation $\tilde{Z}_{n+1}$ and two steps with step size $\frac{\tau}{2}$ to obtain $\hat{Z}_{n+1}$. The numerical approximation $Z_{n+1}$ is then given by
\begin{align}
Z_{n+1} =  2\hat{Z}_{n+1} - \tilde{Z}_{n+1}. \label{rich}
\end{align} 

As discussed in Section \ref{comparison} one has to choose parameters \texttt{tol} and \texttt{tolY}, which can have an impact in the solution. In the following we use $\texttt{tol} = 10^{-10}$ and $\texttt{tolY} = 10^{-12}$. Figure \ref{fig:richardson} shows the relative error of this approach. The reference solution is obtained by the same method but with step size $2^{-8}$, which is 8 times smaller than the smallest step size used in Figure \ref{fig:richardson}.
\begin{figure}[h] 
	\centering 
	\includegraphics[width = 0.5\columnwidth]{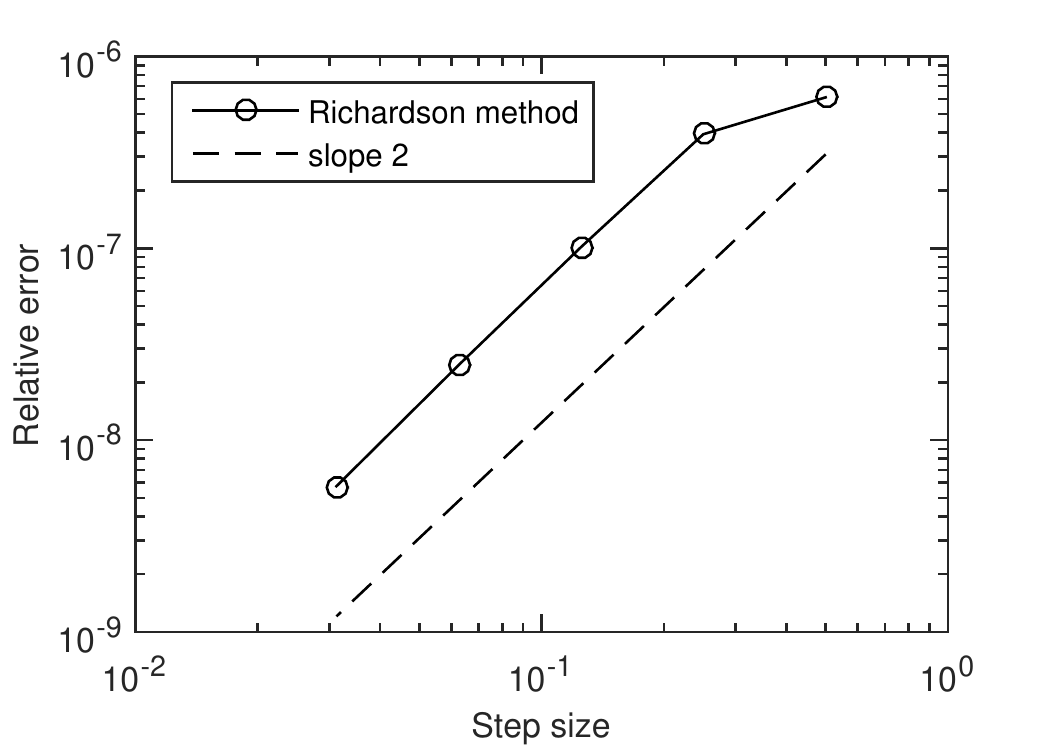} 
	\caption{Error at $T = 5$ of the method \eqref{rich} for different step sizes and $\texttt{tolY} = 10^{-12}$ and $\texttt{tol} = 10^{-10}$. }
	\label{fig:richardson}
\end{figure}

In Table~\ref{tab:nino} we compare the error in Frobenius norm \eqref{norm} of the low-rank integrator with respect to the second order method \eqref{rich} for the years $2013$ (no event) and $2015$ (a strong event), respectively. In both experiments, we took as starting time June~15. In the table we display the absolute errors of the low-rank approximation after one, two and three weeks for two different approximation ranks. We observe that the error gets smaller for rank 100. For higher ranks, however, the splitting error dominates in this example.

\begin{table}[htbp]
	\centering
	\begin{tabular}{c|c|c||c|c|c}
	\multicolumn{3}{c||}{2013} & \multicolumn{3}{|c}{2015} \\
	\hline 
		 weeks & rank &error  &	 weeks & rank & error  \\
		\hline
		 1 & 10 & 3.1117e-05 &  1 & 10 & 5.0599e-05\\
		\hline
		  & 100 &8.8863e-07&    & 100& 1.4408e-06\\
		\hline
		 2 & 10 & 3.1229e-05 &  2 & 10 & 5.3995e-05\\
		\hline
		  & 100 & 1.7762e-06 &   & 100& 2.8798e-06\\
		 \hline
		 3 & 10 & 3.1687e-05 &  3 & 10 & 5.5525e-05\\
		 \hline
		  & 100 & 2.6620e-06&   & 100& 4.3158e-06\\
	\end{tabular}
	\caption{Error of the covariance matrix $\mathcal P(T)$ in \eqref{lde} of size $3900 \times 3900$ in Frobenius norm \eqref{norm} obtained by the low-rank integrator described in Algorithm~\ref{algorithm_Lie} with step size $\tau = 0.1$ days. The reference solution is computed with step size $\tau = 0.001$ days.}
\label{tab:nino}
\end{table}

We further make a realization of the stochastic random field. In Figures \ref{fig:pred_dec13} and \ref{fig:pred_dec} the SST anomalies are given in degree Celsius. We show a section of the Indo-Pacific ocean with Australia being on the lower left part and America on the right part. The black part indicates the land, bright colors indicate higher temperatures and dark ones lower temperatures than usual. 
In Figure \ref{fig:pred_dec13} one can see that the temperature is nearly uniformly distributed, whereas we observe from Figure \ref{fig:pred_dec} the typical face of an El Ni\~no event with the unusual warming of the sea surface.

\begin{figure}[h]
	\centering
	\includegraphics[width=\columnwidth]{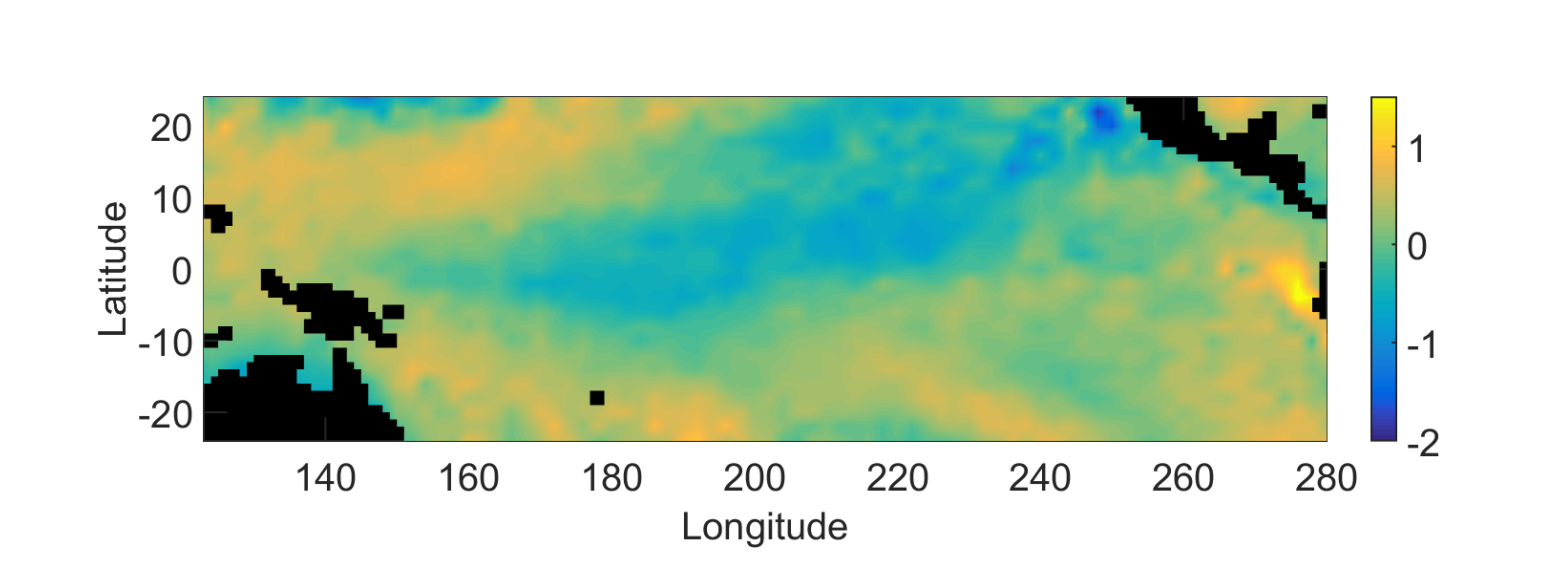}
	\caption{Simulation of the SST anomalies for December $2013$ obtained by the low-rank integrator described in Algorithm~\ref{algorithm_Lie} with rank 10 and using $3900$ grid points.}
	\label{fig:pred_dec13}
\end{figure}
\begin{figure}[h]
	\centering
	\includegraphics[width=\columnwidth]{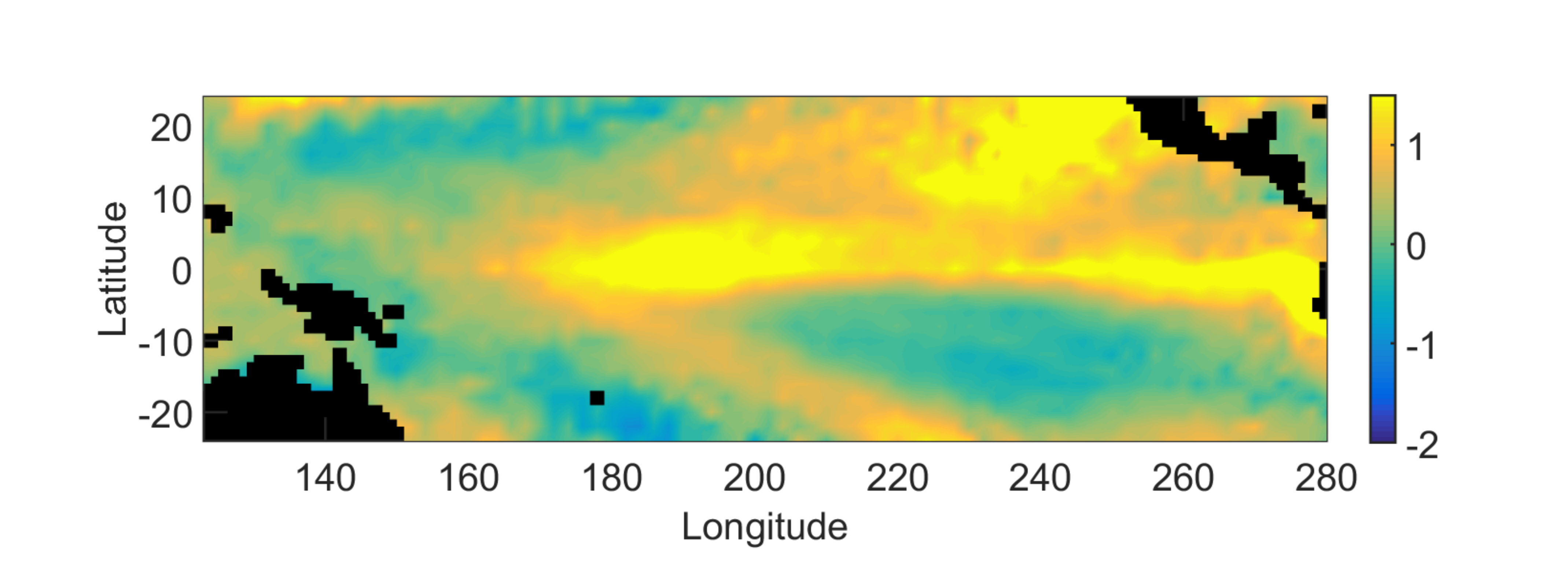}
	\caption{Simulation of the SST anomalies for December $2015$ obtained by the low-rank integrator described in Algorithm~\ref{algorithm_Lie} with rank 10 and using $3900$ grid points.}
	\label{fig:pred_dec}
\end{figure}


\section{Differential Riccati equations}\label{sect:DRE}
We now consider the case where $G(t,X) = Q-XPX$ with constant matrices $Q$ and $P$. The resulting differential equation
\begin{equation} \label{DRE}
\begin{aligned}
\dot{X}(t) & = AX(t) + X(t)A\trasp + Q - X(t) P X(t), \\
X(t_0) & = X_0
\end{aligned}
\end{equation}
is called differential Riccati  equation. Again $X(t)$, $A$, $Q$, $P$ $\in \mathbb{R}^{d \times d}$, and \hbox{$t\in[t_0,T]$}.
The matrices $Q$ and $P$, and the initial value $X_0$ are symmetric and positive semidefinite.
The global existence and positive semidefiniteness of the solution is guaranteed under these conditions, see \cite{DE94}.

\subsection{A low-rank split-step integrator}
Following the general idea of the paper, we split \eqref{DRE} into the following two subproblems:
\begin{subequations}
	\begin{align}
	\label{sub1_ricc} \dot{M}(t) = AM(t)+M(t)A\trasp, \quad & M(t_0) = M_0, \\
	\label{sub2_ricc} \dot{N}(t) = Q-N(t)PN(t), \quad & N(t_0)= N_0, \end{align}
\end{subequations}
where $t \in [t_0,T]$.
The linear problem \eqref{sub1_ricc} is handled as in Section \ref{integrator_lyap}.
The projector-splitting integrator for the solution of \eqref{sub2_ricc} can be specified taking into account the form of the non-linearity. Moreover, we propose the same kind of modification as for DLEs in order to get a low-rank decomposition of the form $USU\trasp$.
The resulting algorithm for the Lie splitting is given in Algorithm \ref{algorithm_Lie_Ricc}.

\begin{algorithm}
	\caption{The first-order low-rank split-step integrator for DREs}
	\label{algorithm_Lie_Ricc}
	\begin{algorithmic}[1]
		\State As in Algorithm \ref{algorithm_Lie}.
		\State As in Algorithm \ref{algorithm_Lie}.
		\State As in Algorithm \ref{algorithm_Lie}.
		\State As in Algorithm \ref{algorithm_Lie}.
		\State Given $U_n^A$ and $S_n^A$ perform one integration step:\label{step5bis}
		\begin{enumerate}
			\item[a.]  Solve $\dot{K}(t) = QU_n^A - K(t)(U_n^A)\trasp P K(t)$ with initial value $K(t_n)= U_n^A S_n^A$.
			Then orthogonalize $K(t_{n+1})$ by a QR decomposition and set $U_{n+1}\widehat{S}_{n+1} = K(t_{n+1})$, where $U_{n+1} \in \mathbb{R}^{d \times r}$ has orthonormal columns and $\widehat{S}_{n+1}\in \mathbb{R}^{r \times r}$.
			\item[b.] Solve $\dot{S}(t) = -U_{n+1}\trasp QU_n^A + S(t)(U_n^A)\trasp P U_{n+1} S(t)$ with initial value $S(t_n)= \widehat{S}_{n+1}$.
			Then set $\widetilde{S}_n = S(t_{n+1})$.
			\item[c.] Solve $\dot{L}(t) = U_{n+1}\trasp Q - L(t) P U_{n+1} L(t)$ with initial value $L(t_n)=\widetilde{S}_n(U_n^A)\trasp$. Then set $S_{n+1} = L(t_{n+1})U_{n+1}$.
		\end{enumerate}
		\State As in Algorithm \ref{algorithm_Lie}.
		\State As in Algorithm \ref{algorithm_Lie}.
	\end{algorithmic}
\end{algorithm}

\subsection{Numerical results for an optimal control problem}
DREs arise, e.g., in optimal control for linear quadratic regulator problems with finite time horizon $T$ for parabolic partial differential equations. Thus we consider the linear control system
\[\dot{x} = Ax + Bu, \quad x(t_0) = x_0, \]
where $A \in \mathbb{R}^{d \times d}$ and $B \in \mathbb{R}^{d \times m}$ are the system matrices, $x \in \mathbb{R}^d$ are the state variables and $u \in \mathbb{R}^m$ is the control.
The output $y \in \mathbb{R}^{q}$ is defined as $y = Cx$, where $C \in \mathbb{R}^{q \times d}$. Both $m$ and $q$ are much smaller than the number $d$ of degrees of freedom.
The functional that has to be minimized is
\[ \mathcal{J}(u,x) = \frac{1}{2}\int_{t_0}^{T} \Big(x(t)\trasp C\trasp Q C x(t)+u(t)\trasp R u(t)\Big) \dd t,
\]
where $Q \in \mathbb{R}^{q \times q}$ is symmetric and positive semidefinite, and $R \in \mathbb{R}^{m \times m}$ is symmetric and positive definite.
The optimal control is given in feedback form by $ u_{\text{opt}} (t) = -R^{-1}B\trasp X(t)x(t)$,  where $X(t)$ is the solution of the following DRE
\begin{equation} \label{rde_control}
\dot{X}(t) = A\trasp X(t) + X(t)A + C\trasp Q C - X(t) BR^{-1}B\trasp X(t). \end{equation}
Note that the solution of \eqref{rde_control} converges for $T \rightarrow \infty$ to a steady state. This limit is given as the solution of the algebraic equation
\begin{equation}
\label{alg_rde} 
0 = A\trasp X(t) + X(t)A + C\trasp Q C - X(t) BR^{-1}B\trasp X(t).
\end{equation}

In order to illustrate the behaviour of Algorithm \ref{algorithm_Lie_Ricc}, we consider a test example proposed in \cite{P}. We consider the following diffusion-advection equation
\begin{equation} 
\label{diff-ad}
\partial_t w  = \Delta w - 10x \partial_x w - 100y \partial_y w,   \qquad  w|_{\partial \Omega} = 0 
\end{equation}
on $\Omega = (0,1)^2$ with homogeneous Dirichlet boundary conditions.
The matrix $A$ arises from the spatial discretization of \eqref{diff-ad} using standard central finite differences, with $\widetilde{d}$ uniformly spaced grid points in each dimension.
We denote the discretization points in the interior of $\Omega$ in $x$ direction with $x_i = i \delta$, $\delta = (\widetilde{d}+1)^{-1}$ for $i=1,\dots, \widetilde{d}$. We take $B \in \mathbb{R}^{d \times 1}$ with
\begin{align*}
B_i =
\begin{cases}
1 \quad \text{if} \quad 0.1<x_i \leq 0.3,\\
0 \quad \text{else},
\end{cases}
\end{align*}
and $C \in \mathbb{R}^{1\times d}$ with
\begin{align*}
C_i =
\begin{cases}
1 \quad \text{if} \quad 0.7<x_i\leq 0.9, \\
0 \quad \text{else}.
\end{cases}
\end{align*}
Further, we choose $R = I$ and $Q = 100I$.

\begin{figure}[tbp]
	\begin{minipage}[t]{.5\textwidth}
		\centering
		\includegraphics[width = \columnwidth]{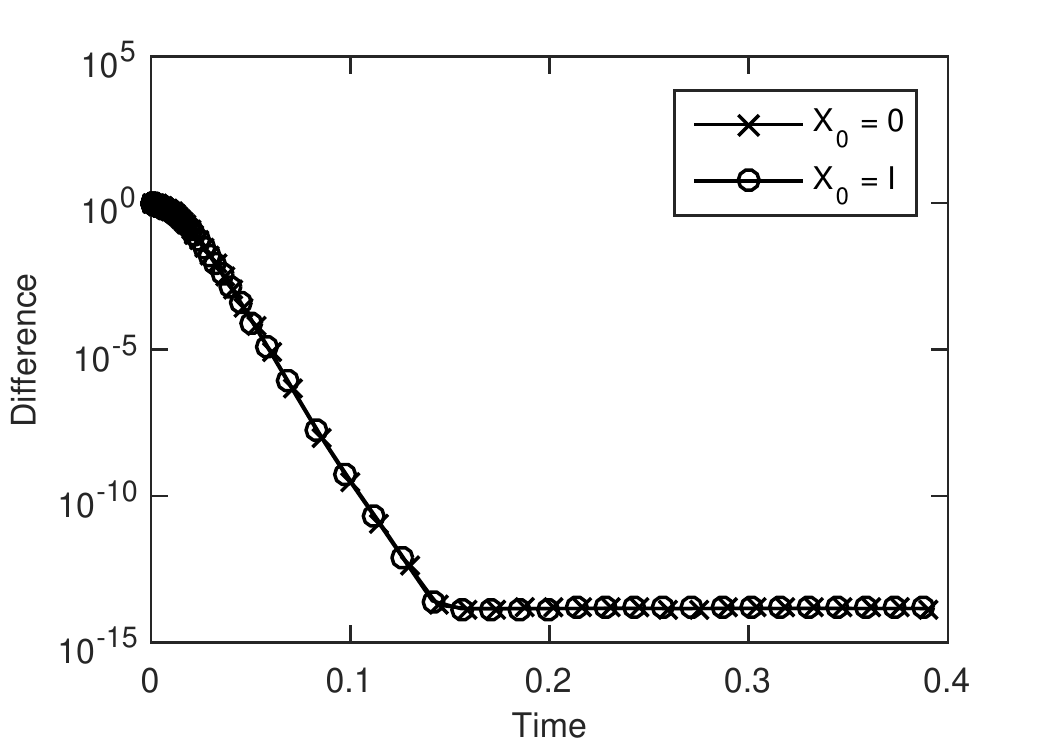}
	\end{minipage} \hfill
	\begin{minipage}[t]{.5\textwidth}
		\centering
		\includegraphics[width = \columnwidth]{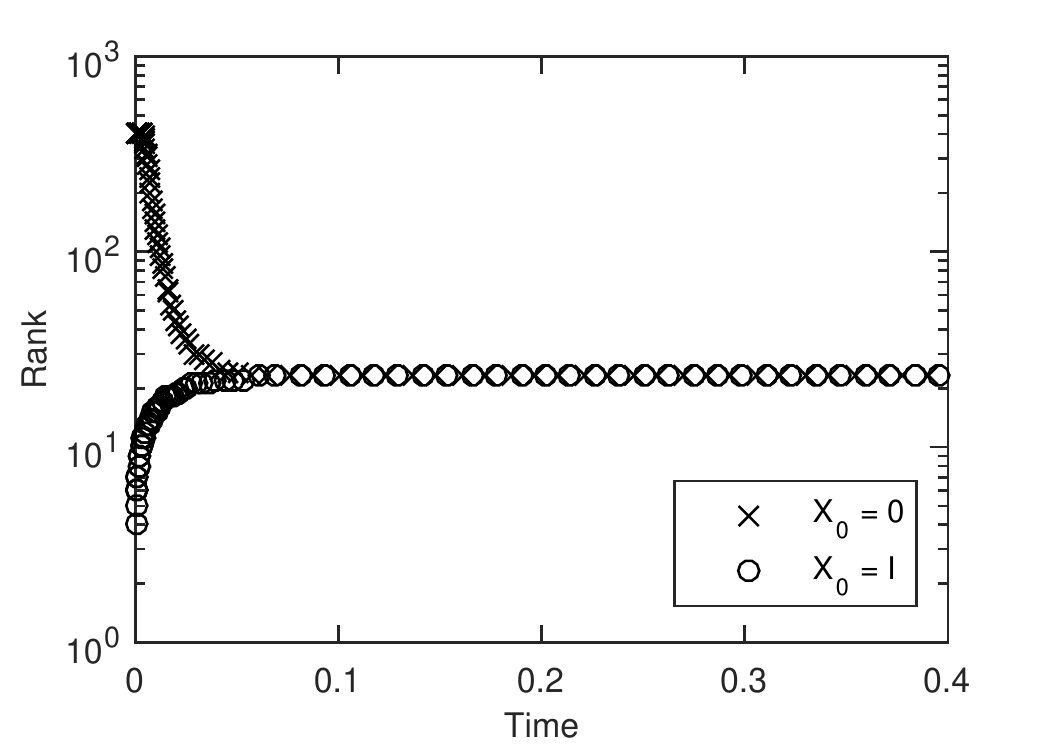}
	\end{minipage}
	\caption{Results for the DRE \eqref{rde_control} for $d=400$ and initial values $X_0 = 0$ and $X_0 = I$, respectively. Left: Difference in Frobenius norm \eqref{norm} between \eqref{alg_rde} and \eqref{rde_control} as a function of time. Right: Rank of the reference solution computed with DOPRI5 as a function of time.}
	\label{fig:ricc_ref}
\end{figure}

In Figure \ref{fig:ricc_ref} we show the behaviour of a reference solution of \eqref{rde_control} computed with DOPRI5.
The left figure shows the convergence of the solution of \eqref{rde_control} to the solution of \eqref{alg_rde} for two different initial values $X_0 = 0$ and $X_0 = I$, respectively. As expected this limit is independent of the choice of the initial data. The relative difference between \eqref{rde_control} and \eqref{alg_rde} in Frobenius norm is shown as function of time.
Further, in Figure \ref{fig:ricc_ref}, right we display the rank of the reference solution as a function of time.

In the following numerical example we take the initial value $X_0 = 0$, the final time $T = 0.1$ and the above reference solution computed with high accuracy. We show the results for $d=400$.
The equations in step~\ref{step5bis} of Algorithm \ref{algorithm_Lie_Ricc} are quadratic matrix differential equations.
We solve them by means of the classical explicit Runge--Kutta method of order 4 with the same step size.

\begin{figure}[btp]
	\begin{minipage}[t]{.49\textwidth}
		\centering
		\includegraphics[width = \columnwidth]{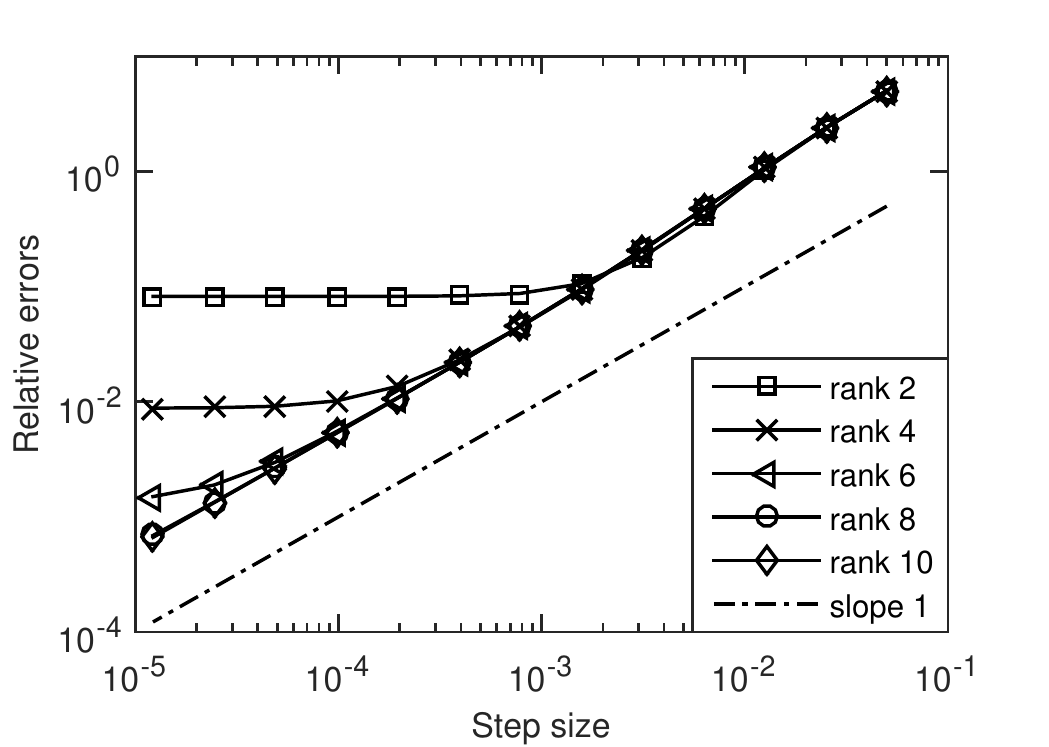}
	\end{minipage} \hfill
	\begin{minipage}[t]{.49\textwidth}
		\centering
		\includegraphics[width = \columnwidth]{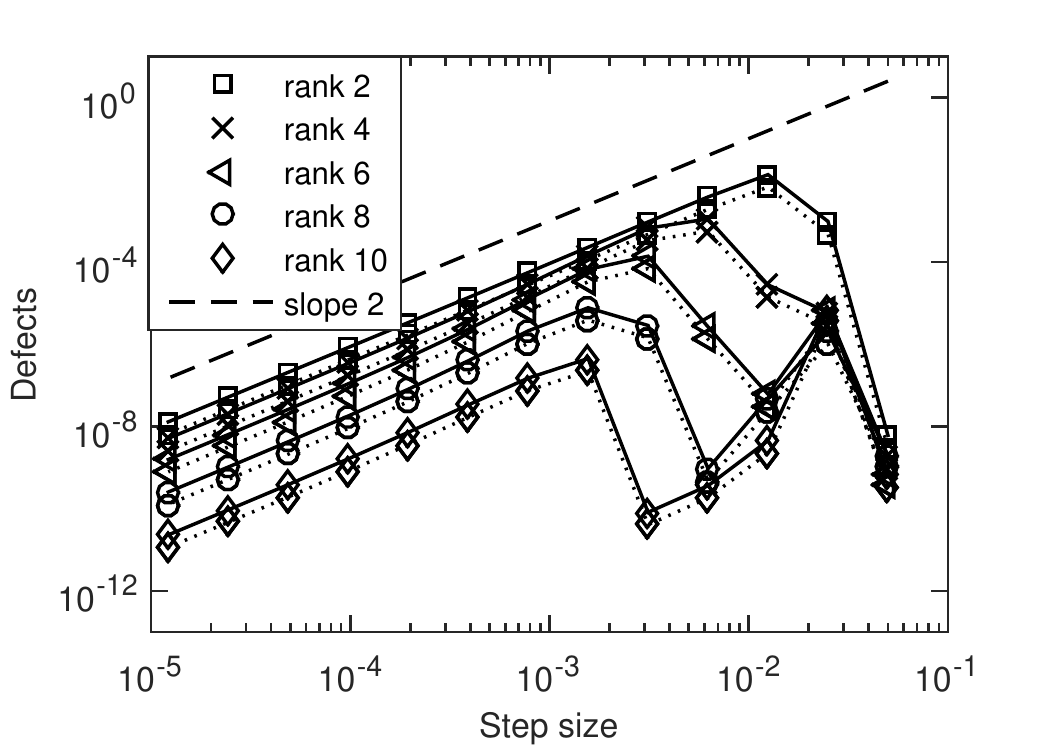}
	\end{minipage}
	\caption{Results for the DRE \eqref{rde_control} for $d=400$. Left: Errors of Lie splitting described in Algorithm \ref{algorithm_Lie_Ricc} in the Frobenius norm \eqref{norm} as a function of step size and rank at $T=0.1$. Right: Defect in symmetry \eqref{def_sym} (continuous line) and in positive semidefiniteness \eqref{def_psd} (dotted line) for Algorithm \ref{algorithm_Lie_Ricc} for different ranks as a function of step size at $T=0.1$.}
	\label{fig:ricc_lie}
\end{figure}

In Figure~\ref{fig:ricc_lie}, left we show the error behaviour of the Lie splitting given in Algorithm \ref{algorithm_Lie_Ricc}. As pointed out above for DLEs, we observe that the error is composed by two different contributions. The choice of a small approximation rank results in stagnation of the error around certain rank-dependent values. On the other hand, if the approximation error becomes small enough, one observes the usual order of convergence one for the outer Lie splitting. In Figure~\ref{fig:ricc_strang}, left we present the corresponding results for Strang splitting. It shows the expected order of convergence two for sufficiently high rank.

In Figures~\ref{fig:ricc_lie}, right and \ref{fig:ricc_strang}, right we show the defects in symmetry \eqref{def_sym} and positive semidefiniteness \eqref{def_psd} in continuous and dotted lines, respectively.
Although our method does not preserve these two features of the solutions, it is remarkable that the defects are always negligible with respect to the overall error of the method.

\begin{figure}[tbp]
	\begin{minipage}[t]{.49\textwidth}
		\centering
		\includegraphics[width = \columnwidth]{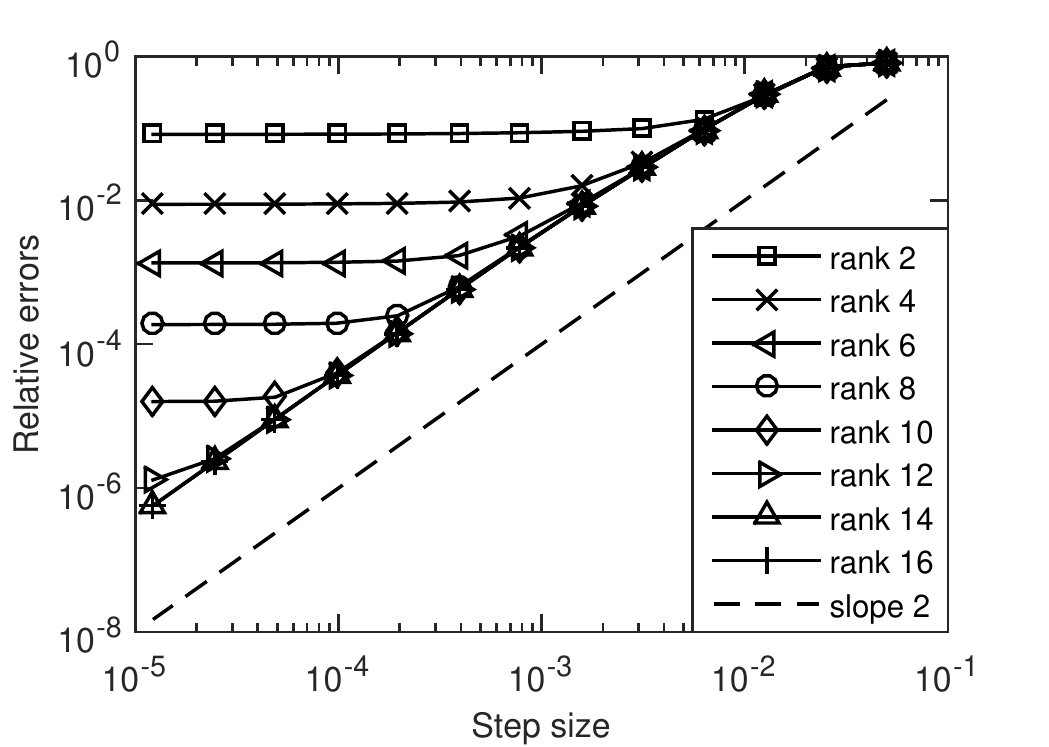}
	\end{minipage}
	\begin{minipage}[t]{.49\textwidth}
		\centering
		\includegraphics[width = \columnwidth]{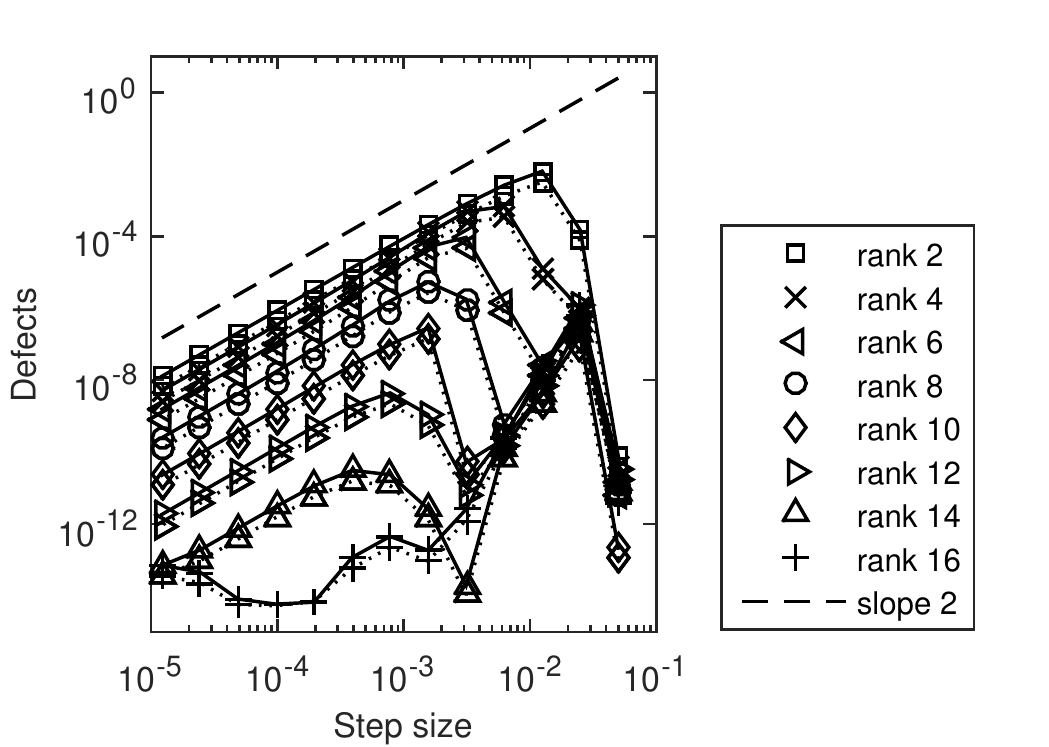}
	\end{minipage}
	\caption{Results for the DRE \eqref{rde_control} for $d=400$. Left: Errors of Strang splitting in Frobenius norm \eqref{norm} as a function of step size and rank at $T=0.1$. Right: Defect in symmetry \eqref{def_sym} (continuous line) and in positive semidefiniteness \eqref{def_psd} (dotted line) for Strang splitting for different ranks as a function of step size at $T=0.1$.}
	\label{fig:ricc_strang}
\end{figure}

\subsection{Generalized differential Riccati equations}

In a stochastic version of the linear quadratic regulator problem, a generalised differential Riccati equation (GDRE) arises
\begin{equation}\label{SDRE}
\begin{aligned}
\dot X(t) &= A\trasp X(t) + X(t)A + Q + C\trasp X(t)C - X(t)B R^{-1} B\trasp X(t) , \quad t \in [t_0,T], \\
X(0) &= H.
\end{aligned}
\end{equation}
We point out that \eqref{SDRE} has the same structure as the DRE \eqref{DRE} except for the term $C\trasp XC$. The matrix $C$ is a weighting term in the noise perturbation, see \cite{YongZ99}. Recently, a numerical method for GRDEs of the form \eqref{SDRE} was proposed in \cite{NLA:NLA2091}.

The above GDRE shares the structure of \eqref{eq} with $G = Q + C\trasp XC - XB R^{-1} B\trasp X$.
It is therefore possible to employ the approach explained in Section \ref{integrator}. Two subproblems of the form  \eqref{sub1} and \eqref{sub2} arise and they can be solved in low-rank form as presented in this work. The projector-splitting integrator can be adapted to the form of the nonlinearity, however, a modification to preserve symmetry and positive semidefiniteness might be required.

\section{Conclusions}
We proposed a new low-rank integrator for a class of matrix differential equations which includes, among others, differential Lyapunov and differential Riccati equations.
A low-rank approximation of the solution is computed in a dynamical way, working only with the low-rank factors of the solution. This approach gives substantial advantages in terms of computing time and memory requirements.
Moreover, the integrator can handle stiffness in an efficient way. Splitting methods form the core ingredient of the new method. They make it possible to treat the stiff part of the equation separately from the non-stiff one.
Numerical results for differential Lyapunov and differential Riccati equations are discussed, and a simulation of the weather phenomenon El Ni\~no is presented.

\section*{Acknowledgements}
This work was supported by the Austrian Science Fund (FWF) -- project id: P27926.

\bibliographystyle{plain}
\section*{\refname}
\bibliography{MOPP17}

\end{document}